
\documentclass[letterpaper, 10 pt, conference]{ieeeconf}  
                                                          \usepackage{mathrsfs}
\usepackage{amsmath}
\usepackage{amssymb}
\newtheorem{theorem}{Theorem}
\newtheorem{lemma}{Lemma}

\newtheorem{assumption}{Assumption}

\newtheorem{corollary}{Corollary}
\newtheorem{problem}{Problem}
\usepackage{mathrsfs}
\usepackage{amsmath}
\usepackage{enumerate}
\usepackage{marvosym}

\bibliographystyle{IEEEtran}

\IEEEoverridecommandlockouts                              
   \usepackage{graphicx}                                                       
\overrideIEEEmargins



\title{\LARGE \bf
Synthesis of Interval Observers for Nonlinear Discrete-Time Systems
}

\author{Adam M Tahir and Beh\c{c}et A\c{c}\i kme\c{s}e 
\thanks{This work was supported by the United States Department of Defense SMART Scholarship. }
\thanks{A.M.T and B.A. are with 
the William E Boeing Department of Aeronautics and Astronautics, University of Washington, Seattle, WA,  98195, USA (e-mails: \{tahiram\},\{behcet\}@uw.edu)}}

\begin{document}

\maketitle
\thispagestyle{empty}
\pagestyle{empty}

\begin{abstract}
A systematic procedure to synthesize interval observers for nonlinear discrete-time systems is proposed. The feedback gains and other matrices are found from the solutions to semidefinite feasibility programs. Two cases are considered: (1)  the interval observer is in the same coordinate frame as the given system, and (2) the interval observer uses a coordinate transformation. The conditions where coordinate transformations are necessary are detailed. Numerical examples are provided to showcase the effectiveness of the interval observers and demonstrate their application to sampled-data systems. 
\end{abstract}

\section{Introduction}
\label{sec:introduction}
The state estimation of nonlinear discrete-time systems (NDTs) is widely studied problem. The motivation to study the state estimation of NDTs lies beyond the application to intrinsically discrete-time systems. Nonlinear continuous-time systems (NCTs)  are often discretized into NDTs for digital implementation \cite{astromwitenmark,disctaylor,approxobserver}.  Furthermore, models of NCTs are often given as NDTs from system identification \cite{continuousidentification}. There are numerous approaches to state estimation for NDTs, including:  extended Kalman filters \cite{EKFNonDisc}, unscented Kalman filters \cite{UnscentedDT},  high-gain observers \cite{high_gainobsdisc}, linearization methods \cite{linearization,linearizableerror,ObsDT}, moving horizon estimators \cite{movinghorizon}, and Luenberger observers \cite{ObserverLipDisc,discretetimeluenberger,UnknownInputObs,pigeons, luenbergerdt}. 
In contrast to the aforementioned state estimators, which provide point estimates of the state, an interval observer (IO) provides a compact set to which the state of a system belongs at each instance of time. The state is enveloped in an interval by ensuring positivity of the errors between the state and its upper and lower bounds. In addition, input-to-state (ISS) stability with respect to the disturbance is ensured so that the interval is ultimately bounded in proportion to the size of the disturbance. Finding IOs that achieve both positivity and ISS stability in the same coordinate frame as the system being estimated is not always possible. In these scenarios, a change of coordinates can often be found where positivity and ISS stability follow more readily.  

IOs for linear discrete-time systems \cite{DiscreteIO,  IODT, UncertainDiscreteTime, Outbreakobserver} and NCTs \cite{Polytopic,IntervalEstimationNonlin, IntervalLip, TahirThesis} have been widely studied in the literature. Less attention has been given to NDTs.   \cite[\S III]{IODT} provides generic conditions under which IOs can be constructed, but no systematic procedure to construct them. Besides, the nonlinear example in that paper is a stable one-dimensional system with no output. In \cite[Ch. 3]{TahirThesis}, the approach in \cite{UncertainDiscreteTime} is extended to synthesize IOs for polytopic NDTs using the linear parameter-varying framework.

In this paper, IOs for NDTs with nonlinearities that have bounded Jacobians are synthesized from the solutions to semidefinite feasibility programs.  The proposed IOs use a linear output feedback term and a mixed-monotone decomposition\footnote{A decomposition of a function into its increasing and decreasing elements. See\cite{CooganFinitieAbstraction,SuffMixMonotone} for a formal definition.}  of the nonlinearity with an output feedback term injected into it. The feasibility programs consist of a set of linear inequalities and a linear matrix inequality (LMI). The linear inequalities ensure positivity of the linear part of the error dynamics and construct the mixed-monotone decomposition, which ensures positivity of the nonlinear part of the error dynamics. The LMI ensures ISS stability. 

The use of injection feedback terms and their synthesis from LMI problems has become commonplace in Luenberger observers for NCTs and NDTs \cite{circlecriterion,DQCObserver, UnknownInputObs, PNLV, Outputfeedback,pigeons}. Their use and synthesis in IOs has not been studied before, to the best of the authors knowledge.  Standard approaches from the Luenberger observer literature cannot directly be applied to the IO synthesis problem because the synthesis of Luenberger observers is exclusively dedicated to the stability of the error terms. In IOs, positivity and stability need to be achieved simultaneously.  Based on the linear inequalities that the mixed-monotone decomposition is constructed from,  it is shown that the mixed-monotone decomposition satisfies an incremental quadratic constraint ($\delta$QC) which can be easily incorporated into the LMI while being able to solve for the injection feedback terms.



The contribution of this paper is the proposition of a systematic procedure to synthesize IOs for NDTs. Specifically:  
\begin{enumerate}[i.]
\item A semidefinite feasibility program to solve for the observer gains and coupling matrices is proposed.
\item To the best of the authors knowledge, this is the first result where injection feedback terms for IOs are synthesized. Numerical examples are provided to showcase their advantages.
\item Conditions where coordinate transformations are necessary are detailed. 
\item A procedure to synthesize IOs with coordinate transformation is provided. 
\end{enumerate}

This paper is organized as follows: \S \ref{s:problemsetup} details the structure of the NDTs under consideration, formally defines the objectives of the IOs, and provides some useful lemmas pertaining to interval analysis. \S\ref{s:IO} provides the structures of the IOs. \S\ref{s:synth} and \S\ref{s:synthcoord} detail the synthesis of the IOs without and with coordinate transformation, respectively. \S\ref{s:numex} provides an example showcasing the advantages of using the injection feedback terms, and a sampled-data system example.  
\subsection{Notation}
$\mathbb{R}$ denotes the set of real numbers, $\mathbb{R}_{\ge 0}$ denotes the set of nonnegative real numbers, $\mathbb{Z}_{\ge 0}$ denotes the set of nonnegative integers. $\|\cdot\|$ denotes the Euclidean norm. For a function $\phi:\mathbb{Z}_{\ge 0}\to \mathbb{R}^{n}$, the notation $\|\phi\|_{\ell_\infty}=\sup\{\|\phi[k]\|: k\in\mathbb{Z}_{\ge 0}\}$, which is the standard $\ell_\infty$-norm when $\phi$ is bounded.

For a matrix $A\in\mathbb{R}^{n\times m}$,  $A\ge 0$ means that all the entries of $A$ are nonnegative. $A^\oplus$ is defined as $A_{ij}^\oplus = \max\{0, A_{ij}\}$ and $A^\ominus=A^\oplus-A$.  The transpose of $A$ is denoted by $A^\top$. For two matrices $A,B\in\mathbb{R}^{n\times m}$, $A\ge B\Leftrightarrow A-B\ge 0$.   The $n\times n$ identity matrix is denoted by $I_n$. 

For a matrix $P=P^\top\in\mathbb{R}^{n\times n}$, the notation $P\succ 0$ (resp. $\preceq 0$) denotes $P$ is positive definite  (resp.  negative semidefinite). The $\star$ notation denotes matrix entries that follow from symmetry. 

A matrix $A\in\mathbb{R}^{n\times n}$ is Schur if all of its eigenvalues belong to the unit disk. It is an M-matrix if its diagonal entries are strictly positive and all of its off-diagonal entries are nonpositive. The spectrum of $A$ is denoted by $\sigma(A)$

\section{Preliminaries}\label{s:problemsetup}
\subsection{System Description}
This paper considers NDTs with state $x\in\mathbb{R}^{n}$ and output $y\in\mathbb{R}^m$ that have the following structure:
\begin{subequations}\label{e:sysNDT}
\begin{align}
&x[k+1] = Ax[k]+p(x[k])+w[k],\\
&y[k] = Cx[k]. 
\end{align}
\end{subequations}
The system matrices are $A\in\mathbb{R}^{n\times n}$ and $C\in\mathbb{R}^{m\times n}$. The nonlinearity $p$ is assumed to have a globally bounded Jacobian that is characterized in the following:
\begin{assumption}\label{assp}
The nonlinearity $p:\mathbb{R}^{n}\to\mathbb{R}^{n}$ is differentiable and there exist known matrices $\underline{D}, \overline{D}$ where $ \overline{D}\ge 0$ and $\underline{D}\le 0$ such that $\underline{D}\le \frac{\partial p}{\partial x}(x)\le \overline{D}$ for all $x\in\mathbb{R}^{n}.$
\end{assumption}
 The characterization of $p$ by linear inequalities lends itself naturally to the proposition of linear inequalities which ensure positivity of the error dynamics. Moreover, the characterization includes the class of differentiable globally Lipschitz nonlinearities and polytopic nonlinearities. 

The disturbance $w$ is unknown and bounded. The following assumption on the disturbance  is standard in the IO literature:
\begin{assumption}\label{assvw}
The vectors $\underline{w},\overline{w}\in\mathbb{R}^n$ are known, and are such that $\underline{w}\le w[k]\le \overline{w}$ for all $ k\in\mathbb{Z}_{\ge 0}$.
\end{assumption}

\subsection{Interval Observer Objectives}\label{s:objectifs}
An IO consists of two state estimates $\underline{x}[k], \overline{x}[k]\in\mathbb{R}^n$. Define the following error term:
\begin{align}
\varepsilon[k] =\left[\begin{array}{cc}\overline{x}^\top[k]-x^\top[k] & x^\top[k]-\underline{x}^\top[k]\end{array}\right]^\top\label{edef}.
\end{align} 
The dynamics of $\underline{x}$ and $\overline{x}$ are to be designed using output feedback so that the dynamics of $\varepsilon$ satisfy the following two objectives:
\begin{enumerate}
\item {\bf Positivity.} If $\varepsilon[0]\ge 0$, then $\varepsilon[k]\ge 0$ for all $k\in\mathbb{Z}_{\ge 0}$.
\item {\bf Input-to-State Stability.} The error is bounded as follows:
\begin{align}
\left\|\varepsilon[k]\right\|\le \beta\left(\|\varepsilon[0]\|, k\right)+\rho\left(\|\Delta w\|_{\ell_\infty}\right),\label{e:ISSgoal}
\end{align}
 where  $\beta\in\mathcal{KL}$, $\rho\in\mathcal{K}_\infty$,\footnote{The definitions of class $ \mathcal{K}_\infty$ and $\mathcal{KL}$ functions can be found in  
\cite[\S 2]{ISSdiscrete}.} and
\begin{align}
\Delta w[k]=\left[\begin{array}{cccc} \overline{w}^\top-w^\top[k]& w^\top[k]-\underline{w}^\top \end{array} \right]^\top,\label{e:deltawdef}
\end{align}
for all $k\in\mathbb{Z}_{\ge 0}$.
\end{enumerate}

The error $\varepsilon[k]$ is defined in \eqref{edef} in such a way that the positivity property implies that if $\underline{x}[0]\le x[0]\le \overline{x}[0]$, then $\underline{x}[k]\le x[k]\le \overline{x}[k]$ for all $k\in\mathbb{Z}_{\ge 0}$. This means that if the initial condition is known to belong to some compact interval, then for all $k\in\mathbb{Z}_{\ge 0}$, an interval to which $x[k]$ belongs is known. The ISS property ensures that this interval remains bounded, and the ultimate bound is proportional to $\|\Delta w\|_{\ell_\infty}$. 
\subsection{Useful Interval Analysis Lemmas}
Before proceeding to the main results, two useful lemmas pertaining to interval analysis are presented. The first lemma is a standard result, so its proof is omitted.

\begin{lemma}\label{l:mmp}
Consider matrices $\overline{B},\underline{B}\in\mathbb{R}^{n\times m}$ where $\underline{B}\le \overline{B}$, and a matrix $A\in\mathbb{R}^{l\times n}$.  The following holds: $A^\oplus\underline{B}-A^\ominus\overline{B}\le AB\le A^\oplus\overline{B}-A^\ominus\underline{B}$ for all $B\in\mathbb{R}^{n\times m}$ such that $\underline{B}\le B\le \overline{B}$.
\end{lemma}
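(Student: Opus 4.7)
The plan is to exploit the decomposition $A = A^{\oplus} - A^{\ominus}$ that follows directly from the definitions $A^{\oplus}_{ij}=\max\{0,A_{ij}\}$ and $A^{\ominus}=A^{\oplus}-A$, which gives a split of $A$ into two componentwise nonnegative matrices. Writing $AB = A^{\oplus}B - A^{\ominus}B$, the problem reduces to bounding each of $A^{\oplus}B$ and $A^{\ominus}B$ separately using the hypothesis $\underline{B}\le B\le\overline{B}$, then recombining.

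First, I would observe that multiplication on the left by a componentwise nonnegative matrix preserves the inequality $\underline{B}\le B\le\overline{B}$: for any $M\ge 0$ of compatible size, $M(B-\underline{B})\ge 0$ and $M(\overline{B}-B)\ge 0$, simply because each entry of the product is a sum of products of nonnegative numbers. Applying this with $M=A^{\oplus}$ yields $A^{\oplus}\underline{B}\le A^{\oplus}B\le A^{\oplus}\overline{B}$, and applying it with $M=A^{\ominus}$ yields $A^{\ominus}\underline{B}\le A^{\ominus}B\le A^{\ominus}\overline{B}$, which after negation becomes $-A^{\ominus}\overline{B}\le -A^{\ominus}B\le -A^{\ominus}\underline{B}$.

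Adding these two chains of inequalities componentwise and substituting $A^{\oplus}B-A^{\ominus}B=AB$ in the middle term delivers the claimed bound
\[
A^{\oplus}\underline{B}-A^{\ominus}\overline{B}\;\le\;AB\;\le\;A^{\oplus}\overline{B}-A^{\ominus}\underline{B}.
\]

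There is really no hard step here; the only thing to be careful about is the sign flip when bounding $-A^{\ominus}B$, which is what causes the roles of $\underline{B}$ and $\overline{B}$ to swap in the contribution from the negative part of $A$. The entire argument is elementary and entrywise, which is why the authors declare the result standard and omit its proof.
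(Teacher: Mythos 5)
Your proof is correct, and since the paper explicitly omits the proof of this lemma as ``a standard result,'' there is no in-paper argument to diverge from; your decomposition $A=A^{\oplus}-A^{\ominus}$ into nonnegative parts, followed by the monotonicity of left-multiplication by a nonnegative matrix and the sign flip on the $-A^{\ominus}B$ term, is exactly the standard argument the authors are alluding to (and it mirrors the technique they do spell out in Lemma~2). Nothing further is needed.
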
 

The following is a new result which will be used to find linear inequalities that construct the mixed-monotone decomposition of $p$ and the $\delta$QC that it satisfies.

\begin{lemma}\label{l:fustercluck}
Consider matrices $\overline{A},\underline{A},\overline{B},\underline{B}\in\mathbb{R}^{n\times n}_{\ge 0}$.  The following holds:
\begin{align}
-\underline{A}\overline{B}-\overline{A}\underline{B}\le AB\le \overline{A}\hspace{1pt}\overline{B}+\underline{A}\hspace{1pt}\underline{B},\label{e:fuster}
\end{align}
for all $-\underline{A}\le A\le \overline{A}$ and $-\underline{B}\le B\le \overline{B}$. 
\end{lemma}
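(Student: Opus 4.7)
The plan is to reduce the matrix inequality to an entrywise statement and exploit the standard nonnegative/nonpositive part decomposition. Specifically, for any real matrix $M$ write $M = M^\oplus - M^\ominus$, where, by the paper's definition, $M^\oplus, M^\ominus \ge 0$ and these two nonnegative matrices have disjoint supports entrywise. The goal is to bound $AB$ by expanding it via such a decomposition and then to use the assumed interval constraints to bound the resulting four terms individually.

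First I would observe that the hypotheses $-\underline{A}\le A\le\overline{A}$ together with $\underline{A},\overline{A}\ge 0$ imply $0\le A^\oplus\le\overline{A}$ and $0\le A^\ominus\le\underline{A}$. Indeed, $A^\oplus_{ij}=\max\{0,A_{ij}\}\le\max\{0,\overline{A}_{ij}\}=\overline{A}_{ij}$, and $A^\ominus_{ij}=\max\{0,-A_{ij}\}\le\max\{0,\underline{A}_{ij}\}=\underline{A}_{ij}$. The analogous bounds hold for $B$, namely $0\le B^\oplus\le\overline{B}$ and $0\le B^\ominus\le\underline{B}$.

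Next I would expand
\begin{equation*}
AB = (A^\oplus-A^\ominus)(B^\oplus-B^\ominus) = A^\oplus B^\oplus + A^\ominus B^\ominus - A^\oplus B^\ominus - A^\ominus B^\oplus.
\end{equation*}
Each of the four products is a product of nonnegative matrices, hence entrywise nonnegative. For the upper bound, dropping the last two terms can only increase the right-hand side entrywise, and then the monotonicity of multiplication of nonnegative matrices gives $A^\oplus B^\oplus\le\overline{A}\,\overline{B}$ and $A^\ominus B^\ominus\le\underline{A}\,\underline{B}$, which together yield $AB\le \overline{A}\,\overline{B}+\underline{A}\,\underline{B}$. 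For the lower bound, the symmetric argument applied to $-AB$ (dropping the now-negative terms $-A^\oplus B^\oplus$ and $-A^\ominus B^\ominus$) gives $-AB\le A^\oplus B^\ominus + A^\ominus B^\oplus\le \overline{A}\,\underline{B}+\underline{A}\,\overline{B}$, which is the desired inequality after negating.

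There is no real obstacle here; the only subtlety is making sure that the signs work out when dropping terms from the expansion of $AB$, which is why it is important that all four of $\underline{A},\overline{A},\underline{B},\overline{B}$ are nonnegative (so that both $A^\oplus B^\ominus$ and $A^\ominus B^\oplus$ are entrywise nonnegative, and the bounds $\overline{A}\,\overline{B}$, $\underline{A}\,\underline{B}$, etc.\ are themselves nonnegative and compatible with the monotonicity of matrix multiplication). The entire argument proceeds entrywise and does not require any additional machinery beyond Lemma~\ref{l:mmp}-style reasoning for nonnegative matrices.
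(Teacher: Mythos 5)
Your proof is correct and follows essentially the same route as the paper's: both decompose $AB=(A^\oplus-A^\ominus)(B^\oplus-B^\ominus)$, use nonnegativity of the four cross products to drop terms for each one-sided bound, and then apply $A^\oplus\le\overline{A}$, $A^\ominus\le\underline{A}$ (and likewise for $B$) together with monotonicity of products of nonnegative matrices. Your only addition is the explicit entrywise verification of $A^\oplus\le\overline{A}$ and $A^\ominus\le\underline{A}$, which the paper asserts directly.
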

\begin{proof}
Decompose $AB=(A^\oplus-A^\ominus)(B^\oplus-B^\ominus)=(A^\oplus B^\oplus+A^\ominus B^\ominus)-(A^\ominus B^\oplus + A^\oplus B^\ominus)$. Recall that $A^\oplus, A^\ominus, B^\oplus, B^\ominus\ge 0$. Therefore, $-(A^\ominus B^\oplus + A^\oplus B^\ominus)\le AB\le (A^\oplus B^\oplus+A^\ominus B^\ominus).$ Since $-\underline{A}\le A^\oplus-A^\ominus\le \overline{A}$, it follows that $A^\oplus\le \overline{A}$ and $A^\ominus\le \underline{A}$. Similarly, $B^\oplus\le \overline{B}$ and $B^\ominus\le \underline{B}$. Therefore, $A^\oplus B^\oplus\le \overline{A}\hspace{1pt}\overline{B}$, and $A^\ominus B^\ominus\le \underline{A}\hspace{1pt}\underline{B}$, so the right-hand inequality in \eqref{e:fuster} follows. The left-hand inequality follows with the same logic. 
\end{proof}

\section{Proposed Interval Observers}\label{s:IO}
Consider the following IO for the NDT \eqref{e:sysNDT}:
\begin{subequations}\label{e:intobs}
\begin{align}
\overline{x}[k+1] = &(A-LC)\overline{x}[k]+\pi(\overline{x}[k], \underline{x}[k],y[k] )+Ly[k]\nonumber\\
&+F(\overline{x}[k]-\underline{x}[k])+\overline{w},\\
\underline{x}[k+1] = &(A-LC)\underline{x}[k]+\pi(\underline{x}[k], \overline{x}[k], y[k])+Ly[k]\nonumber\\
&+F(\underline{x}[k]-\overline{x}[k])+\underline{w}.
\end{align}
\end{subequations}
where $L\in\mathbb{R}^{n\times m}$, $F\in\mathbb{R}^{n\times n}$,
\begin{align*}
&\pi\left(x_1,x_2, y\right)=p((I_n-KC)x_1+Ky)+G\left(x_1-x_2\right),
\end{align*}
$K\in\mathbb{R}^{n\times m}$, and $G\in\mathbb{R}^{n\times m}$. The matrix $ L$ is the linear feedback gain, and the matrix $K$ is the nonlinear injection feedback gain. The matrices $F$ and $G$ are referred to as the coupling matrices, which are used for ensuring positivity.  The matrices $K$ and $G$ are to be designed such that $\pi$ is a mixed-monotone decomposition function for the nonlinearity $p$.

The problem to be solved of synthesizing the IO \eqref{e:intobs} is described as follows:

\begin{problem}\label{p:leafsforever}
Compute the gains $L,K\in\mathbb{R}^{n\times m}$ and coupling matrices $F, G\in\mathbb{R}^{n\times n}$ for \eqref{e:intobs} such that the dynamics of $\varepsilon$, defined in  \eqref{edef}, is a positive system and \eqref{e:ISSgoal} holds.
\end{problem}
\subsection{Interval Observer with Coordinate Transformation}
In some cases, the IO \eqref{e:intobs} cannot be synthesized because the necessary conditions for positivity and ISS stability of the error dynamics cannot be achieved in the given coordinate system $x$.  In these cases, the NDT \eqref{e:sysNDT} can be rewritten in a new coordinate system $z=Sx$, where $S\in\mathbb{R}^{n\times n}$ is an invertible coordinate transformation matrix. The proposed IO with the coordinate transformation is the following:
\begin{subequations}\label{e:intobscoord}
\begin{align}
\overline{z}[k+1] = &S(A-\Lambda C)U\overline{z}[k]+\tilde{\pi}(\overline{z}[k],\underline{z}[k],y[k])+S\Lambda y[k] \nonumber\\
&+\Phi(\overline{z}[k]-\underline{z}[k])+S^\oplus \overline{w}-S^\ominus\underline{w}\\
\underline{z}[k+1] = &S(A-\Lambda C)U\underline{z}[k]+\tilde{\pi}(\underline{z}[k],\overline{z}[k],y[k])+S\Lambda y[k] \nonumber\\
&+\Phi(\underline{z}[k]-\overline{z}[k])+S^\oplus \underline{w}-S^\ominus\overline{w}.
\end{align}
where 
\begin{align*}
&\tilde{\pi}\left(z_1,z_2, y\right)=Sp((U-HCU)z_1 +Hy)+\Gamma\left(z_1-z_2\right),
\end{align*}
\end{subequations}
$\Lambda,H\in\mathbb{R}^{n\times m}$ are the linear and nonlinear injection gains, $\Phi,\Gamma\in\mathbb{R}^{n\times n}$ are the linear and nonlinear coupling matrices, and $S=U^{-1}$.
The new coordinate system, observer gains, and coupling matrices are to be chosen such that the  error 
\begin{align}
\xi[k] =\left[\begin{array}{cc}\overline{z}^\top[k]-z^\top[k] & z^\top[k]-\underline{z}^\top[k]\end{array}\right]^\top\label{xidef}
\end{align} 
is positive and ISS.  
\begin{problem}\label{p:coord}
Find $S, \Phi, \Gamma \in\mathbb{R}^{n\times n}$ and $\Lambda, H\in\mathbb{R}^{n\times m}$ for \eqref{e:intobscoord} such that the dynamics of $\xi$, defined in  \eqref{xidef}, is a positive system and ISS with respect to $\Delta w$, defined in \eqref{e:deltawdef}.
\end{problem}

\section{Interval Observer Synthesis}\label{s:synth} 

In this section a semidefinite program to solve Problem \ref{p:leafsforever} is presented. The dynamics of the error $\varepsilon$, defined in \eqref{edef}, for \eqref{e:intobs} are as follows:
\begin{align}
\varepsilon[k+1] = \mathcal{A}\varepsilon[k]+\Delta p[k]+\Delta w[k],\label{e:errordyn}
\end{align}
where
\begin{align}
&\mathcal{A}=\small \left[\begin{array}{cc}
A-LC+F & F\\
F & A-LC+F
\end{array}\right]\normalsize,\label{e:A}\\
&\Delta p[k]=\small \left[\begin{array}{cc}
\pi\left(\overline{x}[k],\underline{x}[k], y[k]\right)-p(x[k])\\
p(x[k])-\pi\left(\underline{x}[k],\overline{x}[k], y[k]\right)
\end{array}\right],\normalsize\nonumber
\end{align}
and $\Delta w[k]$ is defined in \eqref{e:deltawdef}. 

Positivity of \eqref{e:errordyn} follows if $\mathcal{A}$ is nonnegative, $\Delta p[k]\ge 0$ and $\Delta w[k]\ge 0$ for all $k\in\mathbb{Z}_{\ge 0}$. The linear gain $L$ and coupling matrix $F$ are to be chosen such that $\mathcal{A}$ is nonnegative. The term $\Delta p[k]\ge 0$ for all $k\in\mathbb{Z}_{\ge 0}$ if the injection gain $K$ and coupling matrix $G$ are such that 
\begin{align}
\pi( \underline{x},\overline{x}, Cx)\le p(x)= \pi( x,x, Cx)\le \pi(\overline{x},\underline{x},  Cx), \label{e:mixedmonotnocity}
\end{align}
for all $\underline{x}\le  x\le \overline{x}$, meaning that $\pi$ is a mixed-monotone decomposition function for $p$  \cite{SuffMixMonotone, CooganFinitieAbstraction}. Since $p$ has a globally bounded Jacobian it is mixed-monotone globally in $\mathbb{R}^n$, and $K, G$ such that  \eqref{e:mixedmonotnocity} holds can always be found by the satisfaction of certain linear inequalities involving the bounds on the Jacobian $\underline{D}$ and $\overline{D}$ from Assumption \ref{assp} (cf. \cite[Theorem 2]{SuffMixMonotone}). The disturbance term $\Delta w[k]\ge 0$ for all $k\in\mathbb{Z}_{\ge 0}$ as a consequence of Assumption \ref{assvw}.

ISS stability of \eqref{e:errordyn} follows by finding an ISS-Lyapunov function, which can be performed by feasibility of an LMI. A necessary condition for ISS stability is that $\mathcal{A}$ is Schur. 
%

\begin{theorem}\label{t:syntheorem}
Consider the system \eqref{e:sysNDT} where the nonlinearity $p$ satisfies Assumption \ref{assp} and Assumption \ref{assvw} holds. Suppose there exist\footnote{The bold variables denote variables that are solved for.} an M-matrix ${\bf J}\in\mathbb{R}^{n\times n}$, matrices ${\bf Y,K}\in\mathbb{R}^{n\times m}$, ${\bf W,\underline{\Upsilon},\overline{\Upsilon},G}\in\mathbb{R}^{n\times n}_{\ge 0}$, positive definite matrix ${\bf P}={\bf P}^\top\in\mathbb{R}^{2n\times 2n}$, scalars $\gamma, \tau >0$ and $\lambda\in[0,1)$ such that the following is feasible:
\begin{subequations}\label{synth}
\begin{align}
\mathcal{Q}\ge 0,\label{e:Qpos}\\
-{\bf \underline{\Upsilon}}\le I_n-{\bf K}C\le {\bf \overline{\Upsilon}},\label{e:boundsups}\\
\underline{D}{\bf \overline{\Upsilon}}-\overline{D}\hspace{1pt}{\bf \underline{\Upsilon}}+{\bf G}\ge 0,\label{e:posups}\\
\small 
\left[\begin{array}{cccc}
-\lambda {\bf P} & \mathcal{Q}^\top & \frac{\tau}{2}\Psi^\top & 0\\
\star & {\bf P}-\mathcal{J}-\mathcal{J}^\top & \mathcal{J} & \mathcal{J}\\
\star & \star & -\tau I_{2n} & 0\\
\star & \star & \star & -\gamma I_{2n}
\end{array}\right]\normalsize\preceq 0,\label{e:LMI1}
\end{align}
\end{subequations}
where $\mathcal{J}=\text{blkdiag}\left({\bf {J},{J}}\right)$,
\begin{align}
\mathcal{Q} = \small \left[\begin{array}{cc}
{\bf J}A-{\bf Y}C+{\bf W} & {\bf W}\\ 
{\bf W} & {\bf J}A-{\bf Y}C +{\bf W}
\end{array}\right],\normalsize\nonumber\\
\Psi = \small \left[\begin{array}{cc}
\overline{D}\hspace{1pt}{\bf \overline{\Upsilon}}-\underline{D}\hspace{1pt}{\bf \underline{\Upsilon}}+{\bf G }& {\bf G}\\
{\bf G} &\overline{D}\hspace{1pt}{\bf \overline{\Upsilon}}-\underline{D}\hspace{1pt}{\bf \underline{\Upsilon}}+{\bf G}
\end{array}\right].\normalsize\label{e:psimm}
\end{align}
The IO \eqref{e:intobs} where $L={\bf J}^{-1}{\bf Y}, \;F={\bf J}^{-1}{\bf W}, \;K={\bf K},  \;G={\bf G}, $
 and the initial conditions are such that $\underline{x}[0]\le x[0]\le \overline{x}[0]$, satisfies the following:
\begin{enumerate}[i.]
\item $\underline{x}[k]\le x[k]\le \overline{x}[k]$ for all $k\in\mathbb{Z}_{\ge 0}$. 
\item There exist $\beta\in\mathcal{KL}$ and $\rho\in\mathcal{K}_\infty$ such that \eqref{e:ISSgoal} holds.
\end{enumerate}
\end{theorem}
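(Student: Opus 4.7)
The plan is to treat positivity (item~(i)) and ISS (item~(ii)) in sequence, starting from the error dynamics~\eqref{e:errordyn}. With the gain substitutions $L={\bf J}^{-1}{\bf Y}$ and $F={\bf J}^{-1}{\bf W}$, the state-transition matrix factors as $\mathcal{A}=\mathcal{J}^{-1}\mathcal{Q}$, and since ${\bf J}$ is an M-matrix one has $\mathcal{J}^{-1}\ge 0$; combined with~\eqref{e:Qpos} this gives $\mathcal{A}\ge 0$. Assumption~\ref{assvw} yields $\Delta w[k]\ge 0$, so positivity reduces to showing $\Delta p[k]\ge 0$ whenever $\underline{x}[k]\le x[k]\le\overline{x}[k]$. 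Using the integral form of the mean value theorem, each block of $\Delta p[k]$ can be written as $M_{i}(I_{n}-{\bf K}C)\delta_{i}+{\bf G}(\overline{x}[k]-\underline{x}[k])$ for some matrix $M_{i}$ with $\underline{D}\le M_{i}\le\overline{D}$ and $\delta_{i}\in\{\overline{x}[k]-x[k],\,x[k]-\underline{x}[k]\}$. Applying Lemma~\ref{l:fustercluck} with~\eqref{e:boundsups} bounding $I_{n}-{\bf K}C$ and the Jacobian bounds on $M_{i}$ yields
\begin{equation*}
\underline{D}\,{\bf \overline{\Upsilon}}-\overline{D}\,{\bf \underline{\Upsilon}}\;\le\;M_{i}(I_{n}-{\bf K}C)\;\le\;\overline{D}\,{\bf \overline{\Upsilon}}-\underline{D}\,{\bf \underline{\Upsilon}}.
\end{equation*}
Combining the lower bound with~\eqref{e:posups} and $\overline{x}[k]-\underline{x}[k]=(\overline{x}[k]-x[k])+(x[k]-\underline{x}[k])$ produces $\Delta p[k]\ge 0$ entrywise, and item~(i) follows by induction on~\eqref{e:errordyn}.

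The upper bound on $M_{i}(I_{n}-{\bf K}C)$ supplies an incremental quadratic constraint: after regrouping terms it gives $0\le\Delta p[k]\le\Psi\varepsilon[k]$ componentwise, whence $\|\Delta p[k]\|^{2}\le\varepsilon[k]^{\top}\Psi^{\top}\Delta p[k]$. I would then take the Lyapunov candidate $V(\varepsilon)=\varepsilon^{\top}{\bf P}\varepsilon$ and pre-/post-multiply~\eqref{e:LMI1} by the concatenation $\zeta[k]=[\varepsilon[k]^{\top},\varepsilon[k+1]^{\top},\Delta p[k]^{\top},\Delta w[k]^{\top}]^{\top}$ and its transpose. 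Substituting the dynamics in the form $\mathcal{J}\varepsilon[k+1]=\mathcal{Q}\varepsilon[k]+\mathcal{J}\Delta p[k]+\mathcal{J}\Delta w[k]$, the cross terms from the $(1,2)$, $(2,3)$, and $(2,4)$ blocks sum to $\varepsilon[k+1]^{\top}(\mathcal{J}+\mathcal{J}^{\top})\varepsilon[k+1]$, which combines with the $(2,2)$ block ${\bf P}-\mathcal{J}-\mathcal{J}^{\top}$ to yield exactly $V(\varepsilon[k+1])$. The resulting scalar inequality reads
\begin{equation*}
V(\varepsilon[k+1])-\lambda V(\varepsilon[k])+\tau\bigl(\varepsilon[k]^{\top}\Psi^{\top}\Delta p[k]-\|\Delta p[k]\|^{2}\bigr)-\gamma\|\Delta w[k]\|^{2}\;\le\;0,
\end{equation*}
and the $\delta$QC above makes the $\tau$ term nonnegative, leaving $V(\varepsilon[k+1])\le\lambda V(\varepsilon[k])+\gamma\|\Delta w[k]\|^{2}$. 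Iterating this dissipation inequality with $\lambda\in[0,1)$ and applying eigenvalue bounds on ${\bf P}$ delivers~\eqref{e:ISSgoal} with explicit $\beta\in\mathcal{KL}$ and $\rho\in\mathcal{K}_{\infty}$, establishing item~(ii).

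The main obstacle I expect is the block manipulation inside the LMI. The slack variable $\mathcal{J}$ is introduced precisely to avoid forming the product $\mathcal{A}^{\top}{\bf P}\mathcal{A}$, which would couple ${\bf P}$ bilinearly with ${\bf Y}$ and ${\bf W}$ and preclude the simultaneous convex synthesis of $L$, $F$, $K$, and $G$; extracting the clean dissipation inequality therefore hinges on the specific substitution that collapses the $\mathcal{J}$-cross-terms into $V(\varepsilon[k+1])$. By contrast, the positivity and $\delta$QC steps are comparatively routine once Lemma~\ref{l:fustercluck} is invoked with the right choice of bounds on $M_{i}$ and $I_{n}-{\bf K}C$.
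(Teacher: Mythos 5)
Your proof is correct and follows the paper's architecture for the positivity and $\delta$QC parts almost verbatim: the factorization $\mathcal{A}=\mathcal{J}^{-1}\mathcal{Q}$ with $\mathcal{J}^{-1}\ge 0$ from the M-matrix property, the mean-value representation of each block of $\Delta p[k]$ as $M_i(I_n-{\bf K}C)\delta_i+{\bf G}(\overline{x}[k]-\underline{x}[k])$ (the paper writes this as $\Delta p[k]=\Omega[k]\varepsilon[k]$), Lemma~\ref{l:fustercluck} with \eqref{e:boundsups} and \eqref{e:posups} to get nonnegativity of the coefficient matrices, induction for item~(i), and the componentwise sandwich $0\le\Delta p[k]\le\Psi\varepsilon[k]$ yielding $(\Psi\varepsilon[k]-\Delta p[k])^\top\Delta p[k]\ge 0$. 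Where you genuinely diverge is the last step: the paper first writes the analysis inequality \eqref{e:pigeonLMI} via the $\mathcal{S}$-procedure and then invokes the Projection Lemma to assert its equivalence with the synthesis LMI \eqref{e:LMI1}, whereas you bypass both by taking the congruence of \eqref{e:LMI1} with the trajectory vector $[\varepsilon[k]^\top,\varepsilon[k+1]^\top,\Delta p[k]^\top,\Delta w[k]^\top]^\top$ and using $\mathcal{J}\varepsilon[k+1]=\mathcal{Q}\varepsilon[k]+\mathcal{J}\Delta p[k]+\mathcal{J}\Delta w[k]$ to collapse the cross terms into $\varepsilon[k+1]^\top(\mathcal{J}+\mathcal{J}^\top)\varepsilon[k+1]$. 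Your route is more elementary and self-contained (no external lemma to cite) and it proves exactly what the theorem needs, namely sufficiency of \eqref{e:LMI1} for the dissipation inequality; what it does not give you is the paper's stronger claim that \eqref{e:LMI1} is \emph{equivalent} to \eqref{e:pigeonLMI}, i.e.\ that introducing the slack variable $\mathcal{J}$ costs no conservatism. As a minor point in your favor, the disturbance term that falls out of the quadratic form is $\gamma\|\Delta w[k]\|^2$ as you write it, not $\gamma\|\Delta w[k]\|$ as in the paper's \eqref{e:ISSblah}; either way the ISS conclusion via \cite[Theorem 1]{ISSdiscrete} is unaffected.
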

\begin{proof}
There are three parts to this proof. This first part shows that the satisfaction of  linear inequalities \eqref{e:Qpos}-\eqref{e:posups} and Assumption \ref{assvw} are sufficient for \eqref{e:errordyn} to be a positive system, so the first assertion holds. Then, it is shown that the since $\varepsilon[k]\ge 0$ for all $k\in\mathbb{Z}_{\ge 0}$, $\Delta p[k]$ satisfies a $\delta$QC in the form of \eqref{e:quadconstraint}, which is useful for showing that the matrix inequality \eqref{e:LMI1} implies the positive-definite function 
\begin{align}
V(\varepsilon)=\varepsilon^\top {\bf P} \varepsilon\label{e:Lyap}
\end{align}
is an ISS-Lyapunov function, so the second assertion holds. 
\paragraph{Positivity} The matrix $\mathcal{Q}=J\mathcal{A}$, where $\mathcal{A}$ is defined in \eqref{e:A}, with the following variable substitutions: ${\bf Y}={\bf J}\hspace{2pt}L$ and ${\bf W}={\bf J}\hspace{2pt}F$. Since ${\bf J}$ is an M-matrix, $\mathcal{J}$ is also an M-matrix.  The feasibility of \eqref{e:LMI1} implies that $\mathcal{J}+\mathcal{J}^\top\succeq P\succ 0$. Therefore, $\mathcal{J}$ is invertible. The inverse of an M-matrix is a nonnegative matrix \cite[pp. 161-163]{Greenbaum}. Hence $\mathcal{J}^{-1}\ge 0$, which, along with \eqref{e:Qpos}, implies that 
\begin{align}
\mathcal{A}=\mathcal{J}^{-1}\mathcal{Q}\ge 0.\label{e:posA}
\end{align}

By Assumption \ref{assvw}, 
\begin{align}
\Delta w[k]\ge 0, \label{e:deltaw}
\end{align}
for all $k\in\mathbb{Z}_{\ge 0}$. 

By the differential mean-value theorem \cite[\S 4.3]{LMIbook}, for every $k\in\mathbb{Z}_{\ge 0}$, there exist matrices $D_1[k], D_2[k]$ such that  
\begin{align}
\Delta p[k]=\Omega[k]\varepsilon[k],\label{e:deltapomega}
\end{align}
where 
\begin{align}
&\Omega[k]= \small \left[\begin{array}{cc}
D_1[k](I_n-KC)+G & G\\
G & D_2[k](I_n-KC)+G 
\end{array}\right],\normalsize\nonumber
\end{align}
and
\begin{align}
&\underline{D}\le D_1[k], D_2[k]\le \overline{D}.\label{e:Dinterval}
\end{align}

Using Lemma \ref{l:fustercluck}, \eqref{e:boundsups}, and \eqref{e:Dinterval}, it follows that 
\begin{align}
\underline{D}{\bf \overline{\Upsilon}}-\overline{D}\hspace{1pt}{\bf \underline{\Upsilon}}\le D_1[k](I_n-{\bf K}C)\le \overline{D}\hspace{1pt}{\bf \overline{\Upsilon}}-\underline{D}\hspace{1pt}{\bf \underline{\Upsilon}}\label{e:sandwichups}
\end{align}
for all $k\in\mathbb{Z}_{\ge 0}$. Therefore,  \eqref{e:posups} implies that $D_1[k](I_n-{\bf K}C)+{\bf G}\ge 0$, meaning that $D_1[k](I_n-KC)+G\ge 0$ for all $k\in\mathbb{Z}_{\ge 0}$. Similarly,  $D_2[k](I_n-KC)+G \ge 0$ for all $k\in\mathbb{Z}_{\ge 0}$. In addition, $G={\bf G}\ge 0$, so
\begin{align}
\Omega[k]\ge 0\label{e:Omega}
\end{align}
for all $k\in\mathbb{Z}_{\ge 0}$. Plugging \eqref{e:deltapomega} into \eqref{e:errordyn} yields:
\begin{align}
\varepsilon[k+1]=(\mathcal{A}+\Omega[k])\varepsilon[k]+\Delta w[k].\label{e:LTV}
\end{align}
 Since \eqref{e:posA}, \eqref{e:deltaw}, and \eqref{e:Omega} hold for all $k\in\mathbb{Z}_{\ge 0}$, it can be clearly seen from \eqref{e:LTV} that if $\varepsilon[k]\ge 0$ for some $k\in\mathbb{Z}_{\ge 0}$, then $\varepsilon[k+1]\ge 0$. The initial conditions satisfy $\underline{x}[0]\le x[0]\le \overline{x}[0]$, so $\varepsilon[0]\ge 0$. Therefore, by induction, $\varepsilon[k]\ge 0$ for all $k\in\mathbb{Z}_{\ge 0}$.  This proves the first assertion. 
 
 \paragraph{Quadratic Characterization of $\Delta p$}
Since $\varepsilon[k]\ge 0$ and \eqref{e:Omega} holds for all $k\in\mathbb{Z}_{\ge 0}$, it follows from \eqref{e:deltapomega} that 
\begin{align}
\Delta p[k]\ge 0\label{e:quad1}
\end{align}
for all $k\in\mathbb{Z}_{\ge 0}$. Using \eqref{e:sandwichups}, it follows that $\Omega[k]\le \Psi$ for all $k\in\mathbb{Z}_{\ge 0}$, where $\Psi$ is defined in \eqref{e:psimm}. Since, in addition, $\varepsilon[k]\ge 0$, 
\begin{align}
\Delta p[k]\le \Psi\varepsilon[k]\label{e:quad2}
\end{align}
for all $k\in\mathbb{Z}_{\ge 0}$. The combination of \eqref{e:quad1} and \eqref{e:quad2} imply that $(\Psi\varepsilon[k]-\Delta p[k])^\top\Delta p[k]\ge 0,$
 which can be written as the $\delta$QC
 
  \begin{align}
\left[\begin{array}{cc}\varepsilon[k]\\ \Delta p[k]\end{array}\right]^\top\left[\begin{array}{cc}
 0 & \frac{1}{2}\Psi^\top \\ \star & -I_{2n}
 \end{array}\right]\left[\begin{array}{cc}\varepsilon[k]\\ \Delta p[k]\end{array}\right]\ge 0\label{e:quadconstraint}
 \end{align}
 for all $k\in\mathbb{Z}_{\ge 0}$. 
  
  \paragraph{ISS Stability}
  Consider the quadratic function \eqref{e:Lyap}, which evolves as follows:
  \begin{align*}
 &V(\varepsilon[k+1])-  V(\varepsilon[k])= \varepsilon^\top[k]\left(\mathcal{A}^\top {\bf P}\mathcal{A}-{\bf P}\right)\varepsilon[k]\\
 &+2\varepsilon^\top[k]\mathcal{A}^\top{\bf P} \Delta p[k]+2\varepsilon^\top[k]\mathcal{A}^\top{\bf P}\Delta w[k]+\Delta p^\top[k]{\bf P}\Delta p[k]\\
 &+2\Delta p^\top[k]{\bf P}\Delta w[k]+\Delta w^\top[k]{\bf P}\Delta w[k].
  \end{align*}
Since the $\delta$QC \eqref{e:quadconstraint} is satisfied, by the $\mathcal{S}$-procedure \cite[\S 2.6.3]{LMIbook}, if the following holds:
\begin{align}
\small \begin{bmatrix}
\mathcal{A}^\top {\bf P} \mathcal{A}-\lambda {\bf P} & \mathcal{A}^\top {\bf P}+\frac{\tau}{2}\Psi^\top & \mathcal{A}^\top {\bf P}\\
\star & {\bf P}-\tau I_{2n} & {\bf P}\\
\star & \star & {\bf P}-\gamma I_{2n}
\end{bmatrix}\preceq 0\normalsize,\label{e:pigeonLMI}
\end{align}
then 
\begin{align}
 V&(\varepsilon[k+1])\le \lambda  V(\varepsilon[k])+\gamma \|\Delta w[k]\|\label{e:ISSblah}
\end{align}
for all $k\in\mathbb{Z}_{\ge 0}$.  By the Projection Lemma \cite{LMIcond}, \eqref{e:pigeonLMI} is equivalent to \eqref{e:LMI1} with the variable substitution $\mathcal{Q}=\mathcal{J}\mathcal{A}$. Since $\lambda\in[0,1)$, \eqref{e:ISSblah} implies that  $V$ is an ISS-Lyapunov function, and the error dynamics \eqref{e:errordyn} are ISS with respect to $\Delta w$ \cite[Theorem 1]{ISSdiscrete}. Hence, the satisfaction of the \eqref{e:LMI1} implies that  the error dynamics \eqref{e:errordyn} are ISS with respect to $\Delta w$. Consequently, the second assertion holds. 
\end{proof}

The inequalities \eqref{e:Qpos}-\eqref{e:posups} are linear in the solution variables. Moreover, the constraints that  ${\bf W,\underline{\Upsilon},\overline{\Upsilon},G}$ are nonnegative and ${\bf J}$ is an M-Matrix are linear inequalities. The matrix inequality \eqref{e:LMI1} becomes an LMI when $\tau$ and $\lambda$ are fixed. Therefore, \eqref{synth} is a convex semidefinite program when $\tau$ and $\lambda$ are fixed that can be solved using a solver like \verb|CVX| \cite{cvx}. A gridded search for the positive scalars $\tau$ and $\lambda$ can be easily performed.

If \eqref{synth} is feasible, then the resulting $\mathcal{A}$ is Schur and nonnegative. Detectability of the pair $(A,C)$ is not sufficient for  there to exist a combination of matrices $L$ and $F$ such that $\mathcal{A}$ is Schur and nonnegative.
\begin{lemma}\label{l:diagschur}
Suppose $\mathcal{A}$, defined in \eqref{e:A}, is Schur and nonnegative. Then $A-LC$ is Schur and $(A-LC)_{ii}\in(-1,1)$ for all $i=1,\dots, n$.
\end{lemma}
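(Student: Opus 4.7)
The plan is to first block-diagonalise $\mathcal{A}$ via a similarity transform, and then to exploit the Perron--Frobenius bound $M_{ii}\le \rho(M)$ for nonnegative matrices to pin down the signs and magnitudes of the diagonal entries of $A-LC$.

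First I would introduce the centrosymmetrising transform
\[ T = \begin{bmatrix} I_n & I_n \\ I_n & -I_n \end{bmatrix}, \]
which satisfies $T^2 = 2I_{2n}$, so that $T^{-1}=\tfrac{1}{2}T$. A direct block multiplication shows
\[ T^{-1}\mathcal{A}\,T = \mathrm{blkdiag}\bigl(A-LC+2F,\; A-LC\bigr), \]
and therefore $\sigma(\mathcal{A}) = \sigma(A-LC+2F)\cup\sigma(A-LC)$. Since $\mathcal{A}$ is Schur, both $A-LC$ and $A-LC+2F$ are Schur; this immediately gives the first half of the assertion.

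Next I would exploit the entrywise nonnegativity. From $\mathcal{A}\ge 0$ we read off $F\ge 0$ and $A-LC+F\ge 0$, and summing yields $A-LC+2F\ge 0$. Thus $A-LC+2F$ is nonnegative and Schur, so the classical inequality $M_{ii}\le \rho(M)$ for nonnegative $M$ (which follows by dominating $M$ by $\mathrm{diag}(M_{11},\dots,M_{nn})$ and using the monotonicity of the spectral radius on nonnegative matrices) gives $(A-LC+2F)_{ii}<1$. Combining with $F_{ii}\ge 0$ also forces $(A-LC+F)_{ii}<1$.

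Finally I would decompose $(A-LC)_{ii} = (A-LC+F)_{ii}-F_{ii}$. The upper bound $(A-LC)_{ii}<1$ is immediate from $(A-LC+F)_{ii}<1$ and $F_{ii}\ge 0$. For the lower bound, the identity $F_{ii} = (A-LC+2F)_{ii}-(A-LC+F)_{ii} < 1-0 = 1$, together with $(A-LC+F)_{ii}\ge 0$, gives $(A-LC)_{ii}\ge -F_{ii}>-1$. The only non-routine ingredient is the block-diagonalising similarity, but it is a standard identity for matrices of this centrosymmetric block form; once it is in hand, the rest is just bookkeeping on nonnegativity together with the Perron--Frobenius diagonal bound.
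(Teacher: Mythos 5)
Your proof is correct and follows essentially the same route as the paper's: the paper likewise splits $\sigma(\mathcal{A})=\sigma(A-LC)\cup\sigma(A-LC+2F)$ (citing a lemma rather than exhibiting the similarity transform $T$ as you do) and then bounds the diagonal entries using the fact that a nonnegative Schur matrix has diagonal entries below $1$, applied to $\mathcal{A}$ and to $A-LC+2F$. The only differences are presentational: you argue directly via the Perron--Frobenius bound $M_{ii}\le\rho(M)$, whereas the paper runs the same bookkeeping as a two-case contradiction phrased through the Metzler--Hurwitz correspondence.
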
 
\begin{proof}
It is clear that $A-LC$ is Schur because, according to \cite[Lemma 3]{IPR}, $\sigma(\mathcal{A})=\sigma(A-LC)\cup \sigma(A-LC+2F)$. Now consider two cases:
\begin{enumerate}[1)]
\item Suppose for some $i\in\{1,\dots, n\}$, $(A-LC)_{ii}\ge 1$.  Since $F\ge 0$, $(A-LC+F)_{ii}\ge 1$. Therefore, $(\mathcal{A}-I_{2n})_{ii}\ge 0$. Since $\mathcal{A}\ge 0$ is Schur, $\mathcal{A}-I_{2n}$ is Hurwitz and Metzler  \cite[Corollary 2]{RecursiveMetzler}. This is a contradiction because by \cite[Lemma 3]{poslinobs}, a Metzler matrix is Hurwitz only if its diagonal entries are negative.
\item Suppose for some $i\in\{1,\dots, n\}$, $(A-LC)_{ii}\le -1$. It is then necessary that $F_{ii}\ge 1$, meaning that $(A-LC+2F)_{ii}\ge 1$. Since $A-LC+2F$ is nonnegative, it is not Schur by the same arguments as above. This is a contradiction because $\sigma(\mathcal{A})=\sigma(A-LC)\cup \sigma(A-LC+2F)$ and $\mathcal{A}$ is Schur. 
\end{enumerate}
This concludes the proof.
\end{proof}

It is evident that a necessary condition for feasibility of  \eqref{synth}  is the existence of $\tilde{L}\in\mathbb{R}^{n\times m}$ such that $A-\tilde{L}C$ is Schur and $(A-\tilde{L}C)_{ii}\in(-1,1)$ for all $i=1,\dots, n$. This is not possible for every detectable pair $(A,C)$. The sequel will discuss how to overcome this limitation by using coordinate transformation.

\section{Synthesis of Interval Observers with Coordinate Transformation}\label{s:synthcoord} 
The NDT \eqref{e:sysNDT} is expressed in the coordinates $z=Sx$ as follows:
\begin{align*}
&z[k+1] = SAUz[k]+Sp(Uz[k])+Sw[k],\; y[k] = CUz[k]. 
\end{align*}
The dynamics of $\xi$, defined in \eqref{xidef}, are found:
\begin{align*}
\xi[k+1] = \tilde{\mathcal{A}}\xi[k]+\Delta \pi[k]+\Xi \Delta w[k],
\end{align*}
where 
\begin{align*}
& \tilde{\mathcal{A}} =\small \left[\begin{array}{cc}
S(A-\Lambda C)U+\Phi & \Phi\\
\Phi & S(A-\Lambda C)U+\Phi
\end{array}\right]\normalsize,\\
&\Delta \pi[k]=\small \left[\begin{array}{cc}
\tilde{\pi}\left(\overline{z}[k],\underline{z}[k], y[k]\right)-Sp(Uz[k])\\
Sp(Uz[k])-\tilde{\pi}\left(\underline{z}[k],\overline{z}[k], y[k]\right)
\end{array}\right]\normalsize,\\ &\Xi=\small \begin{bmatrix}S^\oplus & S^\ominus\\ S^\ominus & S^\oplus \end{bmatrix}, \normalsize\nonumber
\end{align*}
and $\Delta w[k]$ is defined in \eqref{e:deltawdef}. The matrices $\Lambda, S, \Phi$ are to be such that $\tilde{\mathcal{A}}$ is Schur and nonnegative. In accordance with Lemma \ref{l:diagschur}, $S$ and $\Lambda$ should be chosen such that the following assumption holds:

\begin{assumption}\label{assSL}
The matrices $\Lambda \in\mathbb{R}^{n\times m}$, $S\in\mathbb{R}^{n\times n}$, and $U=S^{-1}$ are such that $\aleph=S(A-\Lambda C)U$ is Schur and is such that $\aleph_{ii}\in(-1, 1)$ for all $i=1, \dots n$. 
\end{assumption}

There are a couple approaches to construct $\Lambda$ and $S$. If $\Lambda$ is chosen such that $A-\Lambda C$ is Schur and has real eigenvalues, then the Jordan decomposition of $A-\Lambda C$ can be used to construct $S$.  If $\Lambda$ is chosen such that $A-\Lambda C$ is Schur and has complex eigenvalues, then \cite[Lemma 1]{IntervalEstimationNonlin} can be used. Note that $S(A-\Lambda C)U$ does not necessarily have to be nonnegative. 

Once $\Lambda$ and $S$ are chosen, the rest of the matrices required to solve Problem \ref{p:coord} are found from a semidefinite program that is modified from the one described in Theorem \ref{t:syntheorem}.
\begin{corollary}\label{c:bundem}
Consider the system \eqref{e:sysNDT} where the nonlinearity $p$ satisfies Assumption \ref{assp} and Assumption \ref{assvw} holds. Suppose $\Lambda\in\mathbb{R}^{n\times m}$ and $S\in\mathbb{R}^{n\times n}$ are given such that Assumption \ref{assSL} holds. Further, suppose there exists an M-matrix ${\bf J}\in\mathbb{M}^{n\times n}$, matrices ${\bf H}\in\mathbb{R}^{n\times m}$, ${\bf W,\underline{\Upsilon},\overline{\Upsilon},\Gamma}\in\mathbb{R}^{n\times n}_{\ge 0}$, positive definite matrix ${\bf P}={\bf P}^\top\in\mathbb{R}^{2n\times 2n}$, scalars $\gamma, \tau >0$ and $\lambda\in[0,1)$ such that the following hold: \eqref{e:Qpos}, \eqref{e:LMI1}, 
\begin{subequations}
\begin{align}
 -{\bf \underline{\Upsilon}}\le U-{\bf H}CU\le {\bf \overline{\Upsilon}},\label{e:poscoord}\\
\underline{\Theta}\hspace{1pt}{\bf \overline{\Upsilon}}-\overline{\Theta}{\bf \underline{\Upsilon}}+{\bf \Gamma}\ge 0,\label{e:poscoord2}
\end{align}
\end{subequations}
where $U=S^{-1}$, $\mathcal{J}=\text{blkdiag}\left({\bf J,J}\right)$, $\overline{\Theta} = S^\oplus \overline{D}-S^\ominus \underline{D},\; \underline{\Theta} = S^\oplus \underline{D}-S^\ominus \overline{D}$, 
\begin{align}
\mathcal{Q} =\small  \left[\begin{array}{cc}
{\bf J}S(A-\Lambda C)U+{\bf W} & {\bf W}\\ 
{\bf W} & {\bf J}S(A-\Lambda C)U+{\bf W}
\end{array}\right],\normalsize\nonumber\\
\Psi = \small \left[\begin{array}{cc}
\overline{\Theta}\hspace{1pt}{\bf \overline{\Upsilon}}-\underline{\Theta}\hspace{1pt}{\bf \underline{\Upsilon}}+{\bf \Gamma }& {\bf \Gamma}\\
{\bf \Gamma} &\overline{\Theta}\hspace{1pt}{\bf \overline{\Upsilon}}-\underline{\Theta}\hspace{1pt}{\bf \underline{\Upsilon}}+{\bf \Gamma}
\end{array}\right].\normalsize\label{e:newpsi}
\end{align}
The IO \eqref{e:intobscoord} where $\Phi={\bf J}^{-1}{\bf W},  \;H={\bf H}, \;\Gamma={\bf \Gamma},$ and the initial conditions are such that $\underline{z}[0]\le z[0]\le \overline{z}[0]$, satisfies the following:
\begin{enumerate}[i.]
\item $\underline{z}[k]\le z[k]\le \overline{z}[k]$ for all $k\in\mathbb{Z}_{\ge 0}$. 
\item There exist functions $\beta\in\mathcal{KL}$ and $\rho\in\mathcal{K}_\infty$ such that $\left\|\xi[k]\right\|\le \beta\left(\|\xi[0]\|, k\right)+\rho\left(\|\Delta w\|_{\ell_\infty}\right).$
\end{enumerate}
\end{corollary}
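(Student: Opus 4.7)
The plan is to follow the same three-stage template as the proof of Theorem \ref{t:syntheorem}---positivity of each term of the error recursion, a $\delta$QC on the nonlinear discrepancy, and an ISS-Lyapunov argument---with substitutions appropriate for the coordinate-transformed dynamics. The error recursion is $\xi[k+1] = \tilde{\mathcal{A}}\xi[k] + \Delta\pi[k] + \Xi\Delta w[k]$, and the goal at each stage is to verify that replacing $\mathcal{A}$ by $\tilde{\mathcal{A}}$, $\Delta p$ by $\Delta\pi$, $\Delta w$ by $\Xi\Delta w$, and $I_n - KC$ by $U - HCU$ preserves the logical chain.

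For positivity, the substitution $\mathcal{Q} = \mathcal{J}\tilde{\mathcal{A}}$ with ${\bf W} = \mathcal{J}\Phi$ makes the M-matrix / nonnegative-inverse step of Theorem \ref{t:syntheorem} go through verbatim, so \eqref{e:Qpos} yields $\tilde{\mathcal{A}} \ge 0$. The disturbance is handled immediately: $\Xi$ is block-built from $S^\oplus$ and $S^\ominus$, which are nonnegative, and $\Delta w \ge 0$ by Assumption \ref{assvw}, so $\Xi\Delta w \ge 0$. The nonlinear term requires most of the work. The differential mean-value theorem rewrites $\Delta\pi[k] = \tilde{\Omega}[k]\xi[k]$ with diagonal blocks of $\tilde{\Omega}[k]$ equal to $SD_i[k](U - HCU) + \Gamma$ and $\underline{D} \le D_i[k] \le \overline{D}$. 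To sandwich $SD_i[k](U - HCU)$, I would first apply Lemma \ref{l:mmp} to obtain $\underline{\Theta} \le SD_i[k] \le \overline{\Theta}$, observing that $\overline{\Theta} \ge 0$ and $-\underline{\Theta} \ge 0$, so both endpoints are the nonnegative matrices required by Lemma \ref{l:fustercluck}. Combining this with \eqref{e:poscoord} and Lemma \ref{l:fustercluck} yields $\underline{\Theta}\,\overline{\Upsilon} - \overline{\Theta}\,\underline{\Upsilon} \le SD_i[k](U - HCU) \le \overline{\Theta}\,\overline{\Upsilon} - \underline{\Theta}\,\underline{\Upsilon}$. Inequality \eqref{e:poscoord2} then forces $\tilde{\Omega}[k] \ge 0$, and an induction on $k$ starting from $\xi[0] \ge 0$ establishes the first assertion.

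The same pair of sandwich bounds on $\tilde{\Omega}[k]$ immediately gives $\Delta\pi[k] \ge 0$ and $\Delta\pi[k] \le \Psi\xi[k]$ with $\Psi$ as in \eqref{e:newpsi}, yielding the $\delta$QC $(\Psi\xi[k] - \Delta\pi[k])^\top \Delta\pi[k] \ge 0$ in the same form as \eqref{e:quadconstraint}. The $\mathcal{S}$-procedure on $V(\xi) = \xi^\top{\bf P}\xi$ together with the Projection Lemma then reproduces \eqref{e:LMI1} via the substitution $\mathcal{Q} = \mathcal{J}\tilde{\mathcal{A}}$, delivering a decrease condition of the form $V(\xi[k+1]) \le \lambda V(\xi[k]) + \gamma\|\Xi\Delta w[k]\|$. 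Since $\Xi$ is a fixed matrix, $\|\Xi\Delta w\|_{\ell_\infty} \le \|\Xi\|\,\|\Delta w\|_{\ell_\infty}$, so $V$ is still an ISS-Lyapunov function in the sense of \cite[Theorem 1]{ISSdiscrete}, and the existence of $\beta\in\mathcal{KL}$ and $\rho\in\mathcal{K}_\infty$ follows with $\rho$ rescaled by a factor depending on $\|\Xi\|$. The main obstacle is keeping the two-step sandwich for $SD_i[k](U - HCU)$ straight; once $\underline{\Theta},\overline{\Theta}$ take on the role that $\underline{D},\overline{D}$ played in Theorem \ref{t:syntheorem}, the remainder of the argument is mechanical.
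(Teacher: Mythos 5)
Your proposal is correct and follows essentially the same route as the paper's proof: Lemma \ref{l:mmp} to get $\underline{\Theta}\le SD\le\overline{\Theta}$, then Lemma \ref{l:fustercluck} with \eqref{e:poscoord} to sandwich $SD(U-{\bf H}CU)$, with the remaining positivity, $\delta$QC, and ISS steps carried over from Theorem \ref{t:syntheorem} under the substitutions you list. You even supply a detail the paper leaves implicit, namely absorbing the fixed matrix $\Xi$ into the ISS gain via $\|\Xi\Delta w\|_{\ell_\infty}\le\|\Xi\|\,\|\Delta w\|_{\ell_\infty}$.
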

\begin{proof}
Using Lemma \ref{l:mmp}, $\underline{\Theta}\le SD\le \overline{\Theta}$ for all $\underline{D}\le D\le \overline{D}$. Moreover, since $S^\oplus, S^\ominus\ge 0$, it is clear that $\underline{\Theta}\le 0$ and $\overline{\Theta}\ge 0$. Hence, Lemma \ref{l:fustercluck} can be used with \eqref{e:poscoord} to show that
\begin{align*}
\underline{\Theta}{\bf \overline{\Upsilon}}-\overline{\Theta}\hspace{1pt}{\bf \underline{\Upsilon}}\le SD(U-{\bf H}CU)\le \overline{\Theta}\hspace{1pt}{\bf \overline{\Upsilon}}-\underline{\Theta}\hspace{1pt}{\bf \underline{\Upsilon}}
\end{align*}
for all $\underline{D}\le D\le \overline{D}$. This can be used to show that \eqref{e:poscoord} and \eqref{e:poscoord2} imply $\Delta \pi[k]\ge 0$ and  $\Delta \pi[k]\le \Psi\xi[k]$, where $\Psi$ is defined in \eqref{e:newpsi}, for all $k\in\mathbb{Z}_{\ge 0}$. The rest of the steps of the proof follow analogously to the proof of Theorem \ref{t:syntheorem}.
\end{proof}

 The synthesis process for the IO \eqref{e:intobscoord}  follows two steps. First, find $\Lambda$ and $S$  to satisfy Assumption \ref{assSL}. Then, solve the semidefinite program in Corollary \ref{c:bundem} for $\Phi$, $H$ and $\Gamma$. Assumption \ref{assSL} is a necessary, but  not sufficient, condition for \eqref{e:intobscoord} to be synthesized using  Corollary \ref{c:bundem}. Therefore, several choices of $\Lambda$ and $S$ may need to be tried. 

If it is known that the initial condition $x[0]$ satisfies $\underline{x}_0\le x[0]\le \overline{x}_0$, then \eqref{e:intobscoord} can be initialized by $\overline{z}[0]=S^\oplus \overline{x}_0-S^\ominus \underline{x}_0$ and $\underline{z}[0]=S^\oplus \underline{x}_0-S^\ominus \overline{x}_0$. Then, since $\underline{z}[k]\le z[k]\le \overline{z}[k]$, $\underline{x}[k]\le x[k[\le \overline{x}[k]$ where 
\begin{align}
\overline{x}[k] = U^\oplus \overline{z}[k]-U^\ominus \underline{z}[k],\; \underline{x}[k] = U^\oplus \underline{z}[k]-U^\ominus \overline{z}[k],\label{e:lll}
\end{align}
by Lemma \ref{l:mmp}. Furthermore, 
\begin{align*}
\varepsilon[k] = \small \begin{bmatrix}U^\oplus & U^\ominus\\ U^\ominus & U^\oplus \end{bmatrix}\xi[k],\normalsize
\end{align*}
where $\varepsilon[k]$ is defined in \eqref{edef}, so the interval defined by the upper bound $\overline{x}[k]$ and lower bound $\underline{x}[k]$ in \eqref{e:lll} remains bounded and the ultimate bound is proportional to $\|\Delta w\|_{\ell_\infty}$. 

\section{Numerical Simulations}\label{s:numex} 
\subsection{Advantage of the Injection Feedback}
This example will serve to elucidate the advantage of using the nonlinear injection feedback by considering a system where 
\begin{align*}
A = \small \begin{bmatrix}1 & 0 \\ 0 & 0\end{bmatrix}\normalsize ,\; C = \small \begin{bmatrix}1 & 0 \end{bmatrix},\normalsize
\end{align*}
and the nonlinearity $p$ is unspecified, but it satisfies Assumption \ref{assp} with
\begin{align*}
\overline{D}=-\underline{D}= \alpha \tilde{D}
\end{align*}
for some $\alpha>0$ and some matrix $\tilde D\ge 0$. The largest value of $\alpha$ such that \eqref{synth} is feasible is characterized for several $\tilde D$ matrices in Table \ref{table} when the injection feedback is used (i.e. $K\ne 0$) and when it is not used (i.e. $K=0$). 

 \begin{table}[!h]
\renewcommand{\arraystretch}{2}
\caption{Maximum $\alpha$ such that \eqref{synth} is feasible with $\overline{D}=-\underline{D}= \alpha \tilde{D}$}
\label{table}
\setlength{\tabcolsep}{3pt}
\centering
\begin{tabular}{|c|c|c|c|c|c|c|c|c|}
\hline 
$\tilde D $ &
$\begin{bmatrix}
0 & 1\\ 1 & 0
\end{bmatrix}$ & $\begin{bmatrix}
1 & 1\\ 1 & 0
\end{bmatrix}$ & $\begin{bmatrix}
1 & 1\\ 0 & 0
\end{bmatrix}$ & $\begin{bmatrix}
0 & 0\\ 1 & 1
\end{bmatrix}$ 
 & $\begin{bmatrix}
0 & 1\\ 1 & 1
\end{bmatrix}$ 
 & $\begin{bmatrix}
1 & 1\\ 1 & 1
\end{bmatrix}$\\
\hline
$K=0$ & 0.33 & 0.20 & 0.27 & 0.27 & 0.16 & 0.20 \\
\hline
$K\ne 0$ & 0.66 & 0.66 & 0.66  & 0.33 & 0.27 & 0.27\\ 
\hline 
\end{tabular}
\label{tab1}
\end{table}


Consistently, the larger maximum values of $\alpha$ occur when the injection feedback is used. In most cases, the maximum $\alpha$ is nearly doubled or more. This demonstrates that using the injection terms allows for IOs to be synthesized for systems with nonlinearities that have larger variations.


\subsection{Nonlinear System with Sampled Output}
There are many results in the literature for the synthesis of IOs for linear continuous-time systems with sampled outputs \cite{intervalwithdtmeasurement, CDobserver, SampledIO}, and fewer results for NCTs with sampled outputs \cite{Me, CDIOMicroalgae}. The difficulty dealing with nonlinear systems is the impracticability of capturing the inter-sampling behavior exactly. This example uses a forward-Euler approximation for guaranteed interval state estimation of a nonlinear  pendulum model with sampled output:
\begin{subequations}\label{e:conttime}
\begin{align}
\dot{x}(t) &= A_c x(t) +p_c(x(t)),\; t\in\mathbb{R}_{\ge 0}\\
y(kh)&=Cx(kh),\; k\in\mathbb{Z}_{\ge 0},
\end{align}
\end{subequations}
where $x_1$ is the position of the pendulum, $x_2$ is its angular velocity, $h$ is the sampling time,
\begin{align*}
A_c = \small \begin{bmatrix}0 & 1\\ 0 & 0\end{bmatrix}\normalsize,\; p_c(x)=\small\begin{bmatrix}0\\ -\sin(x_2)\end{bmatrix},\; C=\begin{bmatrix}1 & 0\end{bmatrix}.\normalsize
\end{align*}

 The exact discretization of \eqref{e:conttime} is the following:
\begin{align*}
x[k+1]=F_h^e(x[k]),\; y[k] = Cx[k].
\end{align*}
The function $F_h^e(x)$ is the exact state-transition matrix between samples. It is unknown, but can be approximated by forward-Euler as an NDT in the form of \eqref{e:sysNDT}, where
\begin{align*}
A = I_n+h\cdot A_c,\; p(x[k])=h\cdot p_c(x[k]), 
\end{align*}
and $w[k]$ is the approximation error: 
\begin{align*}
w[k] = F_h^e(x[k])-(Ax[k]+p(x[k])),
\end{align*}
which is unknown. As $p_c$ is globally Lipschitz, the forward-Euler approximation is consistent with the exact discretization \cite{approxobserver}. This means that if $x[k]$ belongs to a compact set $\mathcal{X}$ for all $k\in\mathbb{Z}_{\ge 0}$ and $h$ is sufficiently small, there exists $\varrho\in\mathcal{K}$ such that 
\begin{align*}
\|w[k]\|\le h\varrho(h),
\end{align*}
 for all $k\in\mathbb{Z}_{\ge 0}$. Consequently, Assumption \ref{assvw} holds where $\overline{w}=-\underline{w}=h\varrho(h){\bf 1}_2$ and ${\bf 1}_2$ is the vector of ones in $\mathbb{R}^2$. For this example, it can be deduced from the details of the proof of \cite[Lemma 1]{DsicreteApprox} that for $\mathcal{X}=[-\frac{\pi}{2},\frac{\pi}{2}]\times[-1, 1]$, $\varrho(h)=\sqrt{2}h$. 

For the rest of this example, let $h=0.065$ secs. For all $\tilde{L}\in\mathbb{R}^{2}$, $(A-\tilde{L}C)_{22}=1$, so the IO \eqref{e:intobs} cannot be synthesized using Theorem \ref{t:syntheorem}. Alternatively, consider 
\begin{align*}
\Lambda = \small \begin{bmatrix}0.9 & 0.5\end{bmatrix}^\top,\normalsize\; S =\small \begin{bmatrix} 0.6063  & -0.0457\\
   -0.6063  &  1.0457\end{bmatrix},\normalsize
\end{align*}
which satisfies Assumption \ref{assSL}. The transformation matrix $S$ is found using the Jordan decomposition. Using Corollary \ref{c:bundem}, the rest of the matrices for the IO \eqref{e:intobscoord} are found:
\begin{align*}
H = \small \begin{bmatrix}1 & 0.5798\end{bmatrix}^\top,\normalsize \; F=0,\; G=0.
\end{align*}
A simulation of the IO \eqref{e:intobscoord} is performed and shown in Fig. \ref{f:pendobs} where $\overline{x}[k]$ and $\underline{x}[k]$ are determined from \eqref{e:lll}. It can be seen that $\underline{x}[k]\le x[k]\le\overline{x}[k]$ from the fact that the blue markers are always above the black lines and the magenta markers are always below the black lines. The ultimate bound on the interval is larger when $h$ is larger and smaller when $h$ is smaller, as expected. 
\begin{figure}[!t]
\centerline{\includegraphics[width=1.05\columnwidth]{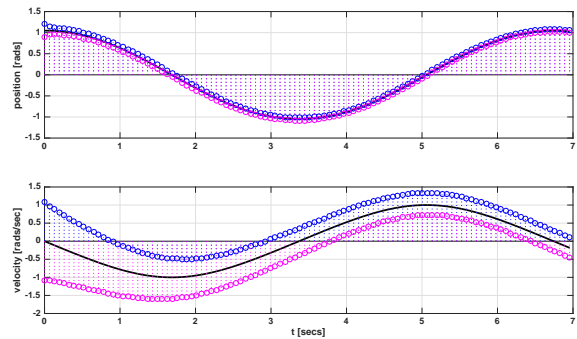}}
\caption{Simulation of the IO \eqref{e:intobscoord} for the nonlinear pendulum example \eqref{e:conttime} with $h=0.065$ secs. The blue and magenta markers denote $\overline{x}$ and $\underline{x}$, respectively. Moreover, dotted lines between the markers and the time axes are provided to show the time of each sample. The black lines are the states of the pendulum. At each sampling instance $t=kh$, the interval estimate for the next sampling instance $t=(k+1)h$ is determined. No information about the state between sampling instances is provided by the IO \eqref{e:intobscoord}; however, intervals for the state between samples can be found by open-loop integration.}
\label{f:pendobs}
\end{figure}
\section{Conclusions}
In this paper, Theorem \ref{t:syntheorem} and Corollary \ref{c:bundem} provide semidefinite feasibility programs to synthesize IOs without and with coordinate transformation, respectively. Lemma \ref{l:diagschur} provides the conditions under which coordinate transformations are necessary to synthesize an IO.  To the best of the authors knowledge, there are no other results in the literature where nonlinear injection feedback terms for IOs are synthesized. It is demonstrated through example that the use of nonlinear injection feedback allows for IOs to be synthesized when the nonlinearities have larger variation. Further, the application to NCTs with sampled output is demonstrated through example. Future work should be dedicated to IO-based feedback control and the synthesis of IOs for NDTs with delays. 

 \bibliography{DTBib}

\end{document}